\renewcommand{\citepunct}{;\ }
\newtheorem{theorem}{Theorem}[section]
\newtheorem{lemma}[theorem]{Lemma}
\newtheorem{proposition}[theorem]{Proposition}
\newtheorem{question}[theorem]{Question}
\newtheorem{alphthm}{Theorem}
\theoremstyle{definition}
\newtheorem{definition}[theorem]{Definition}
\newtheorem{example}[theorem]{Example}
\theoremstyle{remark}
\newtheorem{remark}[theorem]{Remark}
\newtheoremstyle{cited}{.5\baselineskip\@plus.2\baselineskip\@minus.2\baselineskip}{.5\baselineskip\@plus.2\baselineskip\@minus.2\baselineskip}{\itshape}{}{\bfseries}{\bfseries .}{5pt plus 1pt minus 1pt}{\thmname{#1}\thmnumber{ #2}\thmnote{ \normalfont#3}}
\theoremstyle{cited}
\newtheorem{citedprop}[theorem]{Proposition}
\newtheoremstyle{citeddef}{.5\baselineskip\@plus.2\baselineskip\@minus.2\baselineskip}{.5\baselineskip\@plus.2\baselineskip\@minus.2\baselineskip}{}{}{\bfseries}{\bfseries .}{5pt plus 1pt minus 1pt}{\thmname{#1}\thmnumber{ #2}\thmnote{ \normalfont#3}}
\theoremstyle{citeddef}
\newtheorem{citeddef}[theorem]{Definition}
\newtheoremstyle{step}{.25\baselineskip\@plus.1\baselineskip\@minus.1\baselineskip}{.25\baselineskip\@plus.1\baselineskip\@minus.1\baselineskip}{\itshape}{}{\bfseries}{\bfseries .}{5pt plus 1pt minus 1pt}{\thmname{#1}\thmnumber{ #2}\thmnote{ \normalfont(#3)}}
\theoremstyle{step}
\newtheorem{step}{Step}[subsection]
\DeclareMathOperator{\Spec}{Spec}
\newcommand{\bA}{\mathbf{A}}
\newcommand{\FF}{\mathbf{F}}
\newcommand{\QQ}{\mathbf{Q}}
\newcommand{\cO}{\mathcal{O}}
\newcommand{\fm}{\mathfrak{m}}
\newcommand{\fp}{\mathfrak{p}}
\newcommand{\fq}{\mathfrak{q}}
\newcommand{\id}{\mathrm{id}}
\newcommand{\hooklongrightarrow}{\lhook\joinrel\longrightarrow}
\begin{document}
\title{Equidimensional morphisms onto splinters are pure}
\author{Takumi Murayama}
\address{Department of Mathematics\\Purdue University\\150 N. University
Street\\West Lafayette, IN 47907-2067\\USA}
\email{\href{mailto:murayama@purdue.edu}{murayama@purdue.edu}}
\urladdr{\url{https://www.math.purdue.edu/~murayama/}}

\subjclass[2020]{Primary 14B25, 13B10; Secondary 13A35, 14D06, 13D22, 14B05}

\keywords{equidimensional morphism, splinter, pure morphism,
\texorpdfstring{\(F\)}{F}-rational, Boutot's theorem}

\makeatletter
  \hypersetup{
    pdfsubject=\@subjclass,
    pdfkeywords=\@keywords
  }
\makeatother

\begin{abstract}
  We prove that a Noetherian ring $R$ is a splinter if and only if
  for every equidimensional surjective
  morphism $\operatorname{Spec}(S) \to \operatorname{Spec}(R)$, the map $R
  \to S$ is pure.
  This yields a large, nontrivial class of ring maps that are automatically
  pure.
  More generally, we prove that a locally Noetherian scheme $Y$ is locally
  a splinter if
  and only if every locally equidimensional morphism $X \to Y$ is
  strongly pure.
  Special cases of
  our results show that
  equidimensional fibrations over normal
  $\mathbf{Q}$-schemes or regular schemes of arbitrary characteristic
  are strongly pure.
  The main ingredient is a new factorization result for locally
  equidimensional morphisms of schemes, which is of independent interest.
  Additionally, we prove a weak Boutot-type theorem for
  $F$-rationality, which says that
  $F$-rationality descends under pure ring maps that are
  locally equidimensional under universally catenary assumptions.
  This statement is false without the locally equidimensional hypothesis.
\end{abstract}

\maketitle

\section{Introduction}\label{sect:intro}
\subsection{Motivation}
Let \(R \to S\) be a \textsl{pure} ring map, which means that \(M
\to M \otimes_R S\) is injective for every \(R\)-module \(M\)
\cite{Coh59,Oli70}.
There are many examples of pure ring maps, including inclusions of rings of
invariants by linearly reductive group actions, split maps (i.e., maps with a
left inverse as a map of \(R\)-modules), and faithfully flat maps.\medskip
\par It is very useful to know when a map \(R \to S\) is pure.
One reason is because a ring map is pure if and only if it is an effective
descent morphism for modules \cite{Oli71,JT84,Mes00}.
Another reason is that many properties of rings descend under pure ring maps.
For example, Noetherianity \cite[Corollaire 5.5]{Oli73},
normality \cite[Proposition 6.15\((b)\)]{HR74},
and the splinter property
\cite[Proposition 2.11(1)]{DT23} descend under pure maps.
For \(\QQ\)-algebras, rational singularities \citeleft\citen{Bou87}\citepunct
\citen{Mur25}\citemid Theorem C\citeright,
klt type singularities \cite{Zhu24},
Du Bois singularities \cite{GM}, and singularities of dense \(F\)-pure type
\cite{Yam} descend under pure maps.
For \(\FF_p\)-algebras, \(F\)-purity \cite[Proposition 5.13]{HR76}
and all variants of \(F\)-regularity \citeleft\citen{HH90}\citemid Proposition
4.12\citepunct \citen{Has10}\citemid Lemma 3.17\citeright\ descend
under pure maps.
Such results are sometimes called \textsl{Boutot-type theorems}.
See \cite[\S2]{HR74}, \cite{Wat97}, and Remark \ref{rem:termcanpure} for
some properties that do not satisfy a Boutot-type theorem.
\subsection{Main results}
\par The main goal of this paper is to prove the following result, which yields 
a large, nontrivial class of ring maps that are automatically pure.
For the definition of a locally equidimensional morphism, see Definition
\ref{def:equidimensionalmor}.
For example, finite type extensions of domains whose fibers all have the same
dimension are locally equidimensional.
Previously, Theorem \ref{thm:mainaffine} was only known in equal characteristic
zero \citeleft\citen{CGM16}\citemid Lemma 2.7\citepunct \citen{Zhu24}\citemid
Proof of Corollary A.2\citeright.
\begin{alphthm}\label{thm:mainaffine}
  Let \(R\) be a Noetherian ring.
  Then, \(R\) is a splinter if and only if for every finite type ring map
  \(\varphi\colon R \to S\) such that
  \(\Spec(\varphi)\) is locally equidimensional and surjective, the map
  \(\varphi\) is pure.
\end{alphthm}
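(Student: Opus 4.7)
\medskip\noindent\textbf{Proof proposal.}
I would prove the two directions separately.

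For the (easier) \emph{reverse direction}, I would show that the purity hypothesis forces the defining splitting of a splinter. Given any module-finite injective extension \(R \hookrightarrow S\), the induced morphism \(\Spec(S) \to \Spec(R)\) has zero-dimensional fibers, hence is locally equidimensional, and is surjective by lying-over; the hypothesis then gives purity of \(R \to S\). Since \(S\) is finitely generated as an \(R\)-module, purity is equivalent to splitting, so \(R\) is a splinter.

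For the \emph{forward direction}, the plan hinges on the new factorization for locally equidimensional morphisms that the abstract advertises as the paper's main technical input. I would use this factorization to decompose \(\Spec(\varphi)\) into pieces whose purity can be verified individually. The natural expectation is a factorization (possibly after replacing \(\Spec(R)\) by a suitable cover on which purity is controlled) that involves a module-finite surjective piece---whose purity follows because \(R\) is a splinter, hence every such extension splits---and a faithfully flat piece, which is automatically pure. Since purity is preserved under composition, this would yield purity of \(\varphi\).

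The main obstacle I anticipate is the factorization theorem itself rather than the final assembly. Equidimensionality is a delicate condition controlling only fiber dimensions, with no direct flatness or Cohen--Macaulay information, so producing a useful intermediate algebra should require nontrivial geometric input---likely a Noether-normalization-type finite cover handled component by component, combined with a careful analysis of the non-flat locus. A secondary subtlety is that purity must hold for every \(R\)-module, not merely cyclic ones, so the argument must propagate honest splittings through the finite piece rather than only cyclic purity; fortunately the splinter hypothesis supplies precisely such splittings, so the composition step goes through once the factorization is available.
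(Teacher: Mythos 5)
Your reverse direction is essentially the paper's: a module-finite map with surjective \(\Spec\) is locally equidimensional, the hypothesis gives purity, and purity is equivalent to splitting for module-finite maps of Noetherian rings (Lazard), so \(R\) is a splinter. That part is fine.

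The forward direction has a genuine gap, and it sits exactly where you wave at "the final assembly." You correctly guess the shape of the factorization --- a quasi-finite \(g\colon U \to \bA^e_Y\) followed by the faithfully flat (hence pure) projection \(p\colon \bA^e_Y \to Y\) --- but the "module-finite surjective piece" does not live over \(R\), so its purity does \emph{not} "follow because \(R\) is a splinter." Reading the factorization on local rings, \(\cO_{Y,y} \to \cO_{X,x}\) factors through \(\cO_{\bA^e_Y,\,g(x)}\); the first map is flat and pure, but to handle the second you need the localized polynomial ring \(\cO_{\bA^e_Y,\,g(x)}\) itself to be a splinter, and it is not known in general that the splinter property ascends to arbitrary localizations of polynomial extensions. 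This is precisely why the paper proves a \emph{refined} factorization (Proposition \ref{prop:charequidimac}), beyond \cite[Proposition 13.3.1]{EGAIV3}: one can arrange that \(g\) sends the generic point \(x_0\) of a component of \(f^{-1}(y)\) through \(x\) to the generic point of \(p^{-1}(y)\). Then \(\cO_{Y,y} \to \cO_{\bA^e_Y,\,g(x_0)}\) is a regular local map under which \(\fm_y\) extends to the maximal ideal, and Lyu's ascent theorem for splinters applies; the reduction from \(x\) to \(x_0\) is harmless since purity of the composite \(\cO_{Y,y}\to\cO_{X,x}\to\cO_{X,x_0}\) implies purity of \(\cO_{Y,y}\to\cO_{X,x}\) (and is what upgrades partial purity to strong purity). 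A second missing ingredient: \(g\) is only quasi-finite, so there is no module-finite piece as stated; the paper passes to Henselizations, where quasi-finite becomes module-finite over a Henselian local domain and where the splinter property is preserved (Datta--Tucker), and only then does the defining splitting of a splinter enter. Your sketch, relying on the splinter property of \(R\) alone to split the finite piece, would stall at both points.
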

\par A Noetherian ring \(R\) is a
\textsl{splinter} if for every module-finite ring
map \(\varphi\colon R \to S\) such that \(\Spec(\varphi)\) is surjective, the
map \(\varphi\)
splits, i.e., \(\varphi\)
has a left inverse as a map of
\(R\)-modules
\citeleft\citen{Ma88}\citemid p.\ 178\citepunct
\citen{Bha12}\citemid Definition 1.2\citeright.
There are many examples of splinters:
All regular rings are splinters (the direct summand theorem
\citeleft\citen{Hoc73}\citemid Theorem
2\citepunct \citen{And18b}\citemid Th\'eor\`eme 0.2.1\citeright),
a \(\QQ\)-algebra is a splinter if and only if it is normal \cite[Lemma
2]{Hoc73}, and
all weakly \(F\)-regular \(\FF_p\)-algebras are splinters
\cite[Theorem 5.25]{HH94jag}.\medskip
\par Theorem \ref{thm:mainaffine} shows that
despite the definition of a splinter only mentioning module-finite
maps, the splinter property has strong consequences for many ring maps
with positive-dimensional fibers.\medskip
\par More generally, we show the following scheme-theoretic version
of Theorem
\ref{thm:mainaffine}.
Even in the situation of Theorem \ref{thm:mainaffine}, the result below provides
additional information on the purity of maps of the form \(R_{\fq \cap R} \to
S_\fq\) for prime ideals
\(\fq \subseteq S\).
\begin{alphthm}\label{thm:splinterequidim}
  Let \(Y\) be a locally Noetherian scheme.
  Then, \(\cO_{Y,y}\) is a splinter for every \(y \in Y\)
  if and only if
  every locally equidimensional morphism \(f\colon X \to
  Y\) is strongly pure.
\end{alphthm}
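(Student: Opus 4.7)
The plan is to deduce both implications of Theorem~\ref{thm:splinterequidim} from the affine Theorem~\ref{thm:mainaffine}, using that strong purity of a morphism $f\colon X \to Y$ can be tested on the stalk maps $\cO_{Y,f(x)} \to \cO_{X,x}$. Both directions reduce to purity statements between local rings; the bridge to the affine result is spreading out on one side and the factorization result for locally equidimensional morphisms highlighted in the abstract on the other.

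For the direction asserting strong purity of $f$, let $f\colon X \to Y$ be locally equidimensional with every $\cO_{Y,y}$ a splinter. Fix $x \in X$ with $y = f(x)$; we need $\cO_{Y,y} \to \cO_{X,x}$ pure. Passing to affine neighborhoods, $f$ corresponds to a ring map $\varphi\colon R \to S$ whose spectrum is locally equidimensional, but $\varphi$ need not be either finite type or surjective, so Theorem~\ref{thm:mainaffine} does not apply directly. The essential input is the factorization result: Zariski-locally around $x$ and $y$, it lets us replace $\varphi$ by a finite type, surjective, locally equidimensional ring map to which Theorem~\ref{thm:mainaffine} applies, with the same stalk behaviour at $(x,y)$ up to further localization. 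Localizing the resulting purity then yields purity of $\cO_{Y,y} \to \cO_{X,x}$, i.e., strong purity at $x$.

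For the converse, fix $y \in Y$ with $R = \cO_{Y,y}$ and a module-finite surjective $\varphi\colon R \to T$. Spread out $\varphi$ to a finite $A$-algebra $B$ on an affine open $U = \Spec(A) \ni y$, with $R = A_\fp$ and $T = B \otimes_A R$. The morphism $\Spec(B) \to U \hookrightarrow Y$ is finite, hence locally equidimensional, so by hypothesis it is strongly pure; this forces $A \to B$ to be pure. Since localization is compatible with purity, $R \to T$ is pure, and because $T$ is module-finite over $R$, purity implies splitness---hence $\varphi$ splits and $R$ is a splinter.

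The main obstacle is the forward direction, where the factorization lemma must do the real work of reducing an arbitrary locally equidimensional $f$ at a chosen point to the finite type surjective setting of Theorem~\ref{thm:mainaffine}. The converse is routine, combining spreading out, the localization-stability of purity, and the classical fact that pure module-finite extensions of Noetherian rings split.
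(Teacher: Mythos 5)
The converse direction of your proposal is essentially the paper's own argument and is fine. The forward direction, however, has a genuine gap, located exactly where the theorem's content lies. Theorem \ref{thm:mainaffine} concludes only that the \emph{ring map} $\varphi\colon R \to S$ is pure; this yields purity of $R_\fp \to S \otimes_R R_\fp$ and hence purity of $R_\fp \to S_\fq$ for \emph{some} prime $\fq$ over $\fp$ (partial purity), not for the particular prime corresponding to your chosen point $x$. Your closing step, ``localizing the resulting purity then yields purity of $\cO_{Y,y} \to \cO_{X,x}$,'' is false in general: purity of $R \to S$ does not descend to $R_{\fq \cap R} \to S_\fq$ for an arbitrary prime $\fq \subseteq S$. (For instance $k[t] \to k[t] \times k[t]/(t)$ is split, but localizing at the prime of the second factor gives the non-injective map $k[t]_{(t)} \to k$.) The paper emphasizes that upgrading ``partially pure''---the conclusion obtained in \cite{CGM16} and \cite{Zhu24}---to ``strongly pure'' is precisely the new content of Theorem \ref{thm:splinterequidim}, and that even in the affine case Theorem \ref{thm:splinterequidim} carries strictly more information than Theorem \ref{thm:mainaffine}. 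So Theorem \ref{thm:mainaffine} cannot serve as the engine; indeed the paper's logical flow is the reverse of yours, with Theorem \ref{thm:mainaffine} deduced as a corollary of Theorem \ref{thm:splinterequidim}.

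What the forward direction actually requires (Theorem \ref{thm:splinterequidimlocal} in the paper) is: (i) reduce purity of $\cO_{Y,y} \to \cO_{X,x}$ to purity at the generic point $x_0$ of an irreducible component of $f^{-1}(y)$ containing $x$, using that $\cO_{Y,y} \to \cO_{X,x_0}$ factors through $\cO_{X,x}$; (ii) invoke the refined factorization of Proposition \ref{prop:charequidimac} to arrange that the quasi-finite map $g$ sends $x_0$ to the generic point $z$ of $p^{-1}(y) \subseteq \bA^e_Y$, so that $\cO_{\bA^e_Y,z}$ is a splinter by Lyu's ascent theorem---this is exactly where the control over $g(x_0)$ is essential and where the na\"ive factorization of \cite{EGAIV3} is insufficient; and (iii) pass to the Henselization, which preserves the splinter property by Datta--Tucker, to convert the quasi-finite local map into a module-finite extension that then splits by the definition of a splinter. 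Your sketch compresses all of (i)--(iii) into an appeal to Theorem \ref{thm:mainaffine} plus a localization step that does not preserve purity, so the argument as written does not establish strong purity.
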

Following \cite{CGM16},
a morphism \(f\colon X \to Y\) of schemes is \textsl{strongly pure} if
for every \(x \in X\), the map \(\cO_{Y,f(x)} \to \cO_{X,x}\) is pure.
See Definition \ref{def:partiallypure}.\medskip
\par One advantage of strongly pure morphisms over pure ring maps is that some
properties of rings are known to descend
under strongly pure morphisms, even though they do not descend under pure ring
maps.
In other words, a weak Boutot-type theorem asking for descent under strongly
pure maps can hold for a property of rings even though the full Boutot-type
theorem fails for it.
For example, \(F\)-injectivity
descends under strongly pure morphisms
\citeleft\citen{Mur21}\citemid Lemma A.3\citepunct \citen{DM24}\citemid Theorem
3.8\citeright, even though \(F\)-injectivity does not descend under pure ring
maps in
general \cite{Wat97}.\medskip
\par In this direction, we
prove a weak Boutot-type theorem for \(F\)-rationality, which says that
\(F\)-rationality
descends under
locally equidimensional partially pure morphisms as long as the base ring is
universally catenary.
This illustrates another advantage of morphisms of the form in
Theorem \ref{thm:splinterequidim} over pure maps since
\(F\)-rationality does not descend under pure ring maps in
general \cite{Wat97}.
The universally catenary assumption holds for example when \(\cO_{Y,y}\) is
excellent or \(F\)-finite \cite[Theorem 2.5]{Kun76}.
\begin{alphthm}\label{thm:frationallesp}
  Let \(f\colon X \to Y\) be a locally equidimensional morphism between
  locally Noetherian schemes of equal characteristic \(p > 0\).
  Let \(y \in Y\) be a point such that \(f\) is partially pure at \(y\) and
  \(\cO_{Y,y}\) is universally catenary.
  If \(X\) is \(F\)-rational along \(f^{-1}(y)\), then \(\cO_{Y,y}\) is
  \(F\)-rational.
  \par In particular, if \(f\) is locally equidimensional and
  partially pure at every \(y \in Y\), \(X\) is \(F\)-rational, and
  \(Y\) is universally catenary, then \(Y\) is
  \(F\)-rational.
\end{alphthm}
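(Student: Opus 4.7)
The plan is to reduce the theorem to a local ring-theoretic statement and then to establish the two ingredients of \(F\)-rationality—Cohen-Macaulayness and tight closure of parameter ideals—separately, exploiting purity at each step.

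Using the hypothesis that \(f\) is partially pure at \(y\), I would pick a point \(x \in f^{-1}(y)\) such that \(\varphi\colon R := \cO_{Y,y} \to S := \cO_{X,x}\) is pure. Because \(X\) is \(F\)-rational along \(f^{-1}(y)\), the ring \(S\) is \(F\)-rational, hence Cohen-Macaulay. The locally equidimensional hypothesis, combined with universal catenarity of \(R\), should yield the dimension formula
\[
\dim S = \dim R + \dim(S/\fm_R S)
\]
together with the fact that every minimal prime of \(S\) contracts to a minimal prime of \(R\). My first task is to extract these two dimension-theoretic consequences from the general theory of locally equidimensional morphisms developed earlier in the paper, since they drive everything that follows.

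For Cohen-Macaulay descent, I choose a system of parameters \(x_1, \ldots, x_d\) of \(R\) and lift a system of parameters of the closed fiber \(S/\fm_R S\) to elements \(y_1, \ldots, y_e \in \fm_S\). By the dimension formula, \(x_1, \ldots, x_d, y_1, \ldots, y_e\) is a system of parameters, and hence a regular sequence, in the Cohen-Macaulay ring \(S\); in particular \(x_1, \ldots, x_d\) is regular on \(S\). An induction on \(d\) then forces \(x_1, \ldots, x_d\) to be regular in \(R\): when \(d = 1\), any annihilator of \(x_1\) in \(R\) is killed in \(S\) and then, by purity, vanishes in \(R\); the inductive step uses that purity is preserved by base change, so \(R/(x_1, \ldots, x_{k-1}) \to S/(x_1, \ldots, x_{k-1})S\) remains pure. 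Thus \(R\) is Cohen-Macaulay.

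Finally, to show every parameter ideal \(I = (x_1, \ldots, x_d)\) of \(R\) is tightly closed, take \(z \in I^*_R\) with multiplier \(c \in R^\circ\) satisfying \(c z^{p^n} \in I^{[p^n]}\) for \(n \gg 0\). Since minimal primes of \(S\) contract to minimal primes of \(R\), the image of \(c\) lies in \(S^\circ\), and the same relation in \(S\) shows \(z \in (IS)^*_S\). As \(I\) is part of a system of parameters for the Cohen-Macaulay \(F\)-rational ring \(S\), one has \((IS)^*_S = IS\): for each \(N\), the ideal \(IS + (y_1^N, \ldots, y_e^N)\) is a parameter ideal of \(S\) and hence tightly closed, so \(z\) lies in the intersection of these ideals over \(N\), which equals \(IS\) by Krull's intersection theorem on \(S/IS\). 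Thus \(z \in IS \cap R = I\), the last equality being purity applied to \(R/I\). The ``in particular'' statement then follows by running this argument at every point of \(Y\). The main obstacle I anticipate is the dimension-theoretic input about locally equidimensional morphisms over universally catenary bases; once that is in hand, the descent arguments above are essentially formal.
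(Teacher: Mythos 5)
Your core argument is the same as the paper's: localize at a point $x\in f^{-1}(y)$ where $\varphi\colon R=\cO_{Y,y}\to S=\cO_{X,x}$ is pure, use equidimensionality of $f$ plus universal catenarity to see that a (system of) parameters of $R$ extends to part of a system of parameters of $S$, conclude that $IS$ is tightly closed from $F$-rationality of $S$, push $I^*$ into $(IS)^*$ via $R^\circ\to S^\circ$, and contract back by purity. Two of your steps are detours relative to the paper's definition of $F$-rationality (every parameter ideal, i.e.\ every ideal generated by a partial system of parameters, is tightly closed): the Cohen--Macaulay descent is not needed at all, and the Krull-intersection trick for $(IS)^*_S=IS$ is unnecessary because $IS$ is itself a parameter ideal of $S$, hence tightly closed by definition. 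Conversely, note that you only treat ideals generated by full systems of parameters of $R$; with the paper's definition you must handle partial sequences of parameters as well (the same argument works verbatim, since a partial system of parameters of $R$ still extends to part of one for $S$), or else invoke your Cohen--Macaulayness of $R$ together with \cite[Theorem 4.2]{HH94} to pass from full systems of parameters to all parameter ideals.

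The one genuine missing piece is exactly the input you defer: the dimension formula $\dim S=\dim R+\dim(S/\fm_R S)$. Equidimensionality of $f$ at $x$ and universal catenarity of $R$ are not quite enough; one also needs $R$ itself to be equidimensional (otherwise components of $Y$ through $y$ of different local dimensions break the formula). The paper supplies this by first observing that $S$ is normal (being $F$-rational, \cite[Theorem 4.2(b)]{HH94}) and that normality descends under pure maps \cite[Proposition 6.15(d)]{HR74}, so $R$ is a normal domain, hence equidimensional. Your claim that minimal primes of $S$ contract to minimal primes of $R$ (needed for $\varphi(R^\circ)\subseteq S^\circ$) does follow from local equidimensionality, since every irreducible component of $X$ through $x$ dominates an irreducible component of $Y$; but you should make the normality-descent step explicit before invoking the dimension formula.
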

Following \cite{CGM16},
a morphism \(f\colon X \to Y\) of schemes is \textsl{partially pure} if
for every \(y \in Y\), there exists a point \(x \in f^{-1}(y)\) such that
the map \(\cO_{Y,y} \to \cO_{X,x}\) is pure.
See Definition \ref{def:partiallypure}.
Strongly pure surjective morphisms are partially pure, and pure ring maps induce
partially pure morphisms of affine schemes \cite[Lemma 2.2]{HH95}.\medskip
\par In light of Theorems \ref{thm:splinterequidim} and
\ref{thm:frationallesp}, we ask the following:
\begin{question}
  What classes of singularities satisfy a weak Boutot-type theorem?
  In other words, what classes of singularities descend under surjective
  morphisms that are
  \begin{enumerate}[label=\((\alph*)\)]
    \item locally equidimensional;
    \item either strongly pure or partially pure; or
    \item both locally equidimensional and either strongly pure or partially
      pure?
  \end{enumerate}
\end{question}
We mention one example that we have not mentioned before:
\(\QQ\)-factoriality descends under surjective
locally equidimensional locally projective morphisms of integral normal
Noetherian separated schemes \cite[Theorem 3.18]{Nak10}.
\subsection{An example}
\par Before describing the proofs of Theorems \ref{thm:mainaffine} and
\ref{thm:splinterequidim}, we note that
there are many examples of locally equidimensional morphisms
satisfying the hypotheses in Theorem \ref{thm:splinterequidim} that are not
faithfully flat.
Recall that faithfully flat morphisms locally of finite type
are locally equidimensional
\citeleft\citen{EGAIV3}\citemid Corollaire 14.2.2\citepunct
\citen{EGAIV4}\citemid Err\textsubscript{IV}, 36\citeright,
but the converse is not true:
Quotients by linearly reductive groups and finite morphisms are usually not
flat.\medskip
\par Theorem \ref{thm:splinterequidim} says in particular that if \(Y\) is a
locally Noetherian normal \(\QQ\)-scheme or a regular scheme of arbitrary
characteristic, then equidimensional fibrations \(X
\to Y\) are strongly pure.
Here, a surjective morphism \(f\colon X \to Y\) of integral
schemes is a \textsl{fibration} if \(K(Y)\) is algebraically closed
in \(K(X)\) \citeleft\citen{Fuj81}\citemid Definition 1.19\citepunct
\citen{Fuj82}\citemid \((1.19')\)\citeright.
\par
Fibrations are usually not flat.
We mention an explicit example of such a fibration that is nevertheless strongly
pure by Theorem \ref{thm:splinterequidim}.
\begin{example}\label{ex:gross}
  In \cite[Example 3.5]{Gro94}, Gross constructs an
  equidimensional elliptic fibration
  \begin{equation}\label{eq:gross}
    \bar{X}' \longrightarrow \bar{Y}_1
  \end{equation}
  between complex projective
  varieties such that \(\bar{X}'\) is a
  minimal Calabi--Yau threefold and \(\bar{Y}_1\) is a normal surface with
  \(A_1\) singularities.
  \par We claim that \eqref{eq:gross}
  is not flat.
  Since \(\bar{X}'\) is terminal, the singular
  locus of \(\bar{X}'\) is a finite set \cite[Proposition 0.4]{Rei83}.
  Since the fibers of \eqref{eq:gross}
  are equidimensional of dimension 1, we
  see that for every \(y \in \bar{Y}_1\), there exists a point
  \(x \in \bar{X}'\) mapping to \(y\) that is regular on \(\bar{X}'\).
  If \eqref{eq:gross} were flat, this would imply that \(\bar{Y}_1\)
  is regular by
  \cite[Corollaire 6.5.2\((i)\)]{EGAIV2}, a contradiction.
\end{example}
\begin{remark}\label{rem:termcanpure}
  \par By Theorem
  \ref{thm:splinterequidim}, the morphism
  \eqref{eq:gross} is strongly pure.
  Since a surface is terminal if and only if it is regular \cite[Remark
  0.3\((a)\)]{Rei83}, Example \ref{ex:gross} shows that terminal singularities do
  not descend under pure maps.
  Finite quotient singularities are also examples where
  terminal singularities and canonical singularities do not descend under pure maps
  \cite[Example 1.5\((ii)\)]{Rei80}.
\end{remark}

\subsection{Description of proof}\label{sect:describeproof}
\par The new technical ingredient we use to prove Theorem
\ref{thm:splinterequidim} is a factorization result for locally
equidimensional morphisms, which is of independent interest.
See Proposition \ref{prop:charequidimac} for the precise statement, which
strengthens \cite[Proposition 13.3.1]{EGAIV3}.
This factorization result
allows us to avoid the Bertini-type theorems used in the proof of
\cite[Lemma 2.7]{CGM16}.\medskip
\par To explain why our new factorization result is necessary, we
describe our approach for the forward direction of Theorem
\ref{thm:splinterequidim}.
If \(f\colon X \to Y\) is equidimensional at a point \(x \in X\), then
\cite[Proposition 13.3.1]{EGAIV3} says we can
find a neighborhood \(U\) of \(x\) such that \(f\rvert_U\) factors as
\begin{equation}\label{eq:introfactor}
  \begin{tikzcd}[column sep={between origins,3em}]
    U\vphantom{\bA^e_Y}
    \arrow{rr}{g}\arrow{dr}[swap,pos=0.25]{\vphantom{p}f\rvert_U} &
    & \bA^e_Y\arrow{dl}[pos=0.25]{\vphantom{f\rvert_U}p}\\
    & Y
  \end{tikzcd}
\end{equation}
where \(g\) is quasi-finite and \(p\) is the projection morphism.
Since \(p\) is faithfully flat, to show that \(\cO_{Y,y} \to \cO_{X,x}\) is
pure, it suffices to show that
\begin{equation}\label{eq:notnecmodfinmap}
  \cO_{\bA^e_Y,g(x)} \longrightarrow \cO_{U,x}
\end{equation}
is pure.
If \(\cO_{\bA^e_Y,g(x)}\) is a splinter and \eqref{eq:notnecmodfinmap}
is module-finite, then we
would be done by the definition of a splinter.
We can reduce to the case when \eqref{eq:notnecmodfinmap}
is module-finite by passing to
Henselizations since Henselization preserves the splinter property
\cite[Theorem B]{DT23}.
However,
to show that \(\cO_{\bA^e_Y,g(x)}\) is a splinter, we would need to know that
\(g(x)\) is the generic point of \(p^{-1}(y)\) to apply \cite[Theorem
1.2]{Lyu25}.
The existing factorization result in \cite{EGAIV3} does not give control
over whether \(g(x)\) is the generic point of \(p^{-1}(y)\) in
\eqref{eq:introfactor}.\medskip
\par In Proposition \ref{prop:charequidimac}, we construct a factorization
\eqref{eq:introfactor} with this additional property.
This is the key new ingredient to proving Theorem \ref{thm:splinterequidim}.

\subsection*{Acknowledgments}
We thank Farrah Yhee for helpful conversations.

\subsection*{Conventions}
All rings are commutative with identity and all ring maps are unital.
A point \(x\) on a scheme
\(X\) is \textsl{maximal} if \(x\) is the generic point of one of the
irreducible components of \(X\) \cite[Chapitre 0, (2.1.1)]{EGAInew}.

\section{Preliminaries}\label{sect:prelim}
\subsection{Purity}
We recall the definition of a pure map of rings or modules.
Pure maps are also known as \textsl{universally injective} ring maps \cite[p.\
3]{RG71}, and a ring map is pure if and only if it is an effective descent
morphism for modules \citeleft\citen{Oli71}\citepunct \citen{JT84}\citemid
Chapter II, \S5, Theorem 3\citepunct \citen{Mes00}\citemid Theorem 4\citeright.
\begin{citeddef}[{\citeleft\citen{Coh59}\citemid p.\ 383\citepunct
  \citen{Oli70}\citemid D\'efinition 1.1\citeright}]
  Let \(R\) be a ring.
  A map \(\varphi\colon M \to M'\) of \(R\)-modules
  is \textsl{pure} if
  \[
    \id_N \otimes_R \varphi\colon
    N \otimes_R M \longrightarrow N \otimes_R M'
  \]
  is injective for every \(R\)-module \(N\).
  A ring map \(R \to S\) is \textsl{pure} if it is a pure map of \(R\)-modules.
\end{citeddef}
For morphisms of schemes, we have the following notions.
\begin{definition}\label{def:partiallypure}
  Let \(f\colon X \to Y\) be a morphism of schemes.
  \begin{enumerate}[label=\((\roman*)\),ref=\roman*]
    \item \cite[(2) on p.\ 38]{CGM16}
      Let \(x \in X\) be a point.
      We say that \(f\) is \textsl{pure at \(x\)} if the map
      \(\cO_{Y,f(x)} \rightarrow \cO_{X,x}\)
      is pure.
      We say that \(f\) is \textsl{strongly pure} if \(f\) is pure at every \(x
      \in X\).
    \item \cite[(3) on p.\ 38]{CGM16}
      Let \(y \in f(X)\) be a point.
      We say that \(f\) is \textsl{completely pure at \(y\)} if \(f\) is pure at
      every \(x \in f^{-1}(y)\).
    \item\label{def:yamaguchistar} \cite[Definition 5.7]{Yam}
      Let \(y \in Y\) be a point.
      We say that \(f\) \textsl{satisfies Yamaguchi's condition \((*)\) at
      \(y\)} if there exists a maximal point \(x \in f^{-1}(y)\) such that \(f\)
      is pure at \(x\).
    \item \cite[(4) on p.\ 38]{CGM16}
      Let \(y \in Y\) be a point.
      We say that \(f\) is \textsl{partially pure at \(y\)} if \(f\) is pure at
      some \(x \in f^{-1}(y)\).
  \end{enumerate}
\end{definition}
Fixing \(x \in X\) and setting \(y = f(x)\), we have the following implications.
The two vertical implications in the top left square hold by \cite[Chapter I,
\S3, no.\ 5, Proposition 9]{Bou72}.
The other implications hold by definition.
\begin{equation}\label{eq:pureimplications}
  \begin{tikzcd}[baseline=(star.base)]
    \text{\(f\) is faithfully flat} \rar[Rightarrow]
    \arrow[Rightarrow]{d}
    \arrow[phantom]{dr}
    & \text{\(f\) is flat at \(x\)} \arrow[Rightarrow]{d}
    \\
    |[alias=star]|
    \text{\(f\) is strongly pure}
     \rar[Rightarrow]
    & \text{\(f\) is completely pure at \(y\)} \rar[Rightarrow]
    \dar[Rightarrow]
    & \begin{tabular}{@{}c@{}}
      \text{\(f\) satisfies Yamaguchi's}\\\text{condition
      \((\hyperref[def:yamaguchistar]{*})\) at \(y\)}
    \end{tabular}
    \dar[Rightarrow]\\
    & \text{\(f\) is pure at \(x\)} \rar[Rightarrow]
    & \text{\(f\) is partially pure at \(y\)}
  \end{tikzcd}
\end{equation}
These implications are not reversible in general
\citeleft\citen{CGM16}\citemid Corollary 5.6.2\citepunct
\citen{Yam}\citemid Example 5.8\citeright.
\begin{remark}
  Let \(f\colon X \to Y\) be a morphism of schemes.
  We connect the properties of morphisms defined in
  Definition \ref{def:partiallypure} to the notion of purity defined in
  \cite[Definition 3.10]{Mes04a}.
  Below, ``universally \(\mathcal{P}\)'' means ``\(\mathcal{P}\) holds
  under arbitrary base change.''
  Fpqc morphisms are defined following
  \cite[Definition 2.34]{Vis05}.
  \[
    \begin{tikzcd}[column sep=-1em,row sep=large]
      \text{\(f\) is
      fpqc}\arrow[Rightarrow]{d}[swap]{\substack{\text{\cite[Thm.\ 2.55]{Vis05}}\\\text{\cite[Rem.\ 3.13]{Mes04a}}}}
      \arrow[Rightarrow]{rr}{\eqref{eq:pureimplications}}
      & &
      \begin{tabular}{@{}c@{}}
        every base change \(f'\colon X' \to Y'\)\\
        is partially pure at every \(y' \in Y'\)
      \end{tabular}
      \arrow[Rightarrow]{d}\\
      \begin{tabular}{@{}c@{}}
        \(f\) is universally a\\
        regular epimorphism
      \end{tabular}
      \arrow[Rightarrow,start anchor=south,end anchor=north west]{dr}{\text{\cite[Prop.\ 3.3]{Mes04a}}}
      & &
      \begin{tabular}{@{}c@{}}
        \(f\) is universally\\
        scheme-theoretically dominant
      \end{tabular}
      \arrow[Rightarrow,start anchor=south,end anchor=north east]{dl}[swap]{\text{\cite[Prop.\ 6.4]{Mes04b}}}\\
      & \begin{tabular}{@{}c@{}}
        \(f\) is pure in the sense\\
        of \cite[Definition 3.10]{Mes04a}
      \end{tabular}
      \arrow[Rightarrow,dashed,start anchor=north west,end anchor=south,shift
      left=5pt,bend left=30]{ul}{\substack{\text{\(f\) quasi-compact}\\\text{\cite[Cor.\
      5.10]{Mes04b}}}}
      \arrow[Rightarrow,dashed,start anchor=north east,end anchor=south,shift
        right=5pt,bend
      right=30]{ur}[swap]{\substack{\text{\(f\) quasi-compact}\\\text{\cite[Prop.\
      6.4]{Mes04b}}}}
    \end{tikzcd}
  \]
  To show the vertical implication in the top right corner, we note that if
  \(f\) is partially pure at a point \(y \in Y\), then the composition
  \(\cO_{Y,y} \to (f_*\cO_X)_y \to \cO_{X,x}\)
  is injective for some \(x \in f^{-1}(y)\).
  Thus, if \(f\) is partially pure at every \(y \in Y\), then \(f\) is
  scheme-theoretically dominant.
\end{remark}
\subsection{Locally equidimensional morphisms}
We recall the definition of a locally equidimensional morphism from
\cite[Err\textsubscript{IV}, 34 and 35]{EGAIV4}.
Locally equidimensional morphisms were called ``equidimensional morphisms'' in
\cite[\S13]{EGAIV3}.
\begin{citedprop}[{\cite[Proposition 13.3.1]{EGAIV3}}]\label{prop:charequidim}
  Let \(f\colon X \to Y\) be a morphism of schemes that is locally of finite
  type.
  Let \(x \in X\) be a point.
  Then, the following conditions are equivalent:
  \begin{enumerate}[label=\((\alph*)\),ref=\alph*]
    \item\label{prop:charequidima}
      There exist an integer \(e\) and an open neighborhood \(U\) of \(x\)
      such that the image of every irreducible component of \(U\) under \(f\)
      is dense in an irreducible component of \(Y\) and such that
      for every \(x' \in U\), the topological space
      \[
        U \cap f^{-1}\bigl(f(x')\bigr)
      \]
      is equidimensional of dimension \(e\).
    \item\label{prop:charequidimb}
      There exist an integer \(e\), an open neighborhood \(U\) of \(x\), and
      a factorization
      \[
        \begin{tikzcd}[column sep={between origins,3em}]
          U\vphantom{\bA^e_Y}
          \arrow{rr}{g}\arrow{dr}[swap,pos=0.25]{\vphantom{p}f\rvert_U} &
          & \bA^e_Y\arrow{dl}[pos=0.25]{\vphantom{f\rvert_U}p}\\
          & Y
        \end{tikzcd}
      \]
      where \(g\) is quasi-finite and \(p\) is the projection map,
      such that the image of every irreducible component of \(U\) under \(g\) is
      dense in \(\bA^e_Y\).
  \end{enumerate}
\end{citedprop}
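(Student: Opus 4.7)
The strategy is to prove the two implications separately, with (\ref{prop:charequidimb}) $\Rightarrow$ (\ref{prop:charequidima}) being a direct dimension count. Given the factorization, the projection $p\colon \bA^e_Y \to Y$ is smooth of constant relative dimension $e$, so each $p^{-1}(y) = \bA^e_{\kappa(y)}$ is equidimensional of dimension $e$. Quasi-finiteness of $g$ implies that $U \cap f^{-1}(f(x')) = g^{-1}(\bA^e_{\kappa(f(x'))})$ has dimension at most $e$ at every point, and the density assumption on the images of irreducible components of $U$ under $g$, combined with dimension preservation for quasi-finite morphisms on irreducible closed subschemes, forces each irreducible component of the fiber to have dimension exactly $e$. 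Density of $f(Z)$ in an irreducible component of $Y$ for each irreducible component $Z$ of $U$ follows by post-composing with the open (hence dominant) projection $p$.

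The heart of the argument is (\ref{prop:charequidima}) $\Rightarrow$ (\ref{prop:charequidimb}). I would first reduce to the affine case by shrinking $Y$ to $V = \Spec(A)$ and $U$ to $\Spec(S)$ with $S$ finitely generated over $A$. Setting $y = f(x)$, the local ring $\cO_{f^{-1}(y),x}$ is the local ring at $x$ of a finite type $\kappa(y)$-scheme equidimensional of dimension $e$ at $x$, so Noether normalization (applied to a suitable affine neighborhood of $x$ in the fiber) yields elements $t_1,\dots,t_e \in S$ whose images in $S \otimes_A \kappa(y)$ make $\cO_{f^{-1}(y),x}$ module-finite over the subring generated by them, after localization. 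The $A$-algebra map $A[T_1,\dots,T_e] \to S$ sending $T_i \mapsto t_i$ defines a morphism $g\colon U \to \bA^e_Y$ whose fiber over $g(x)$ is finite at $x$, so $g$ is quasi-finite at $x$ by construction. Openness of the quasi-finite locus \cite[Th\'eor\`eme 13.1.4]{EGAIV3} then allows shrinking $U$ so that $g$ is quasi-finite on all of $U$.

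To establish the density clause in (\ref{prop:charequidimb}), I would shrink $U$ further so that it contains only irreducible components through $x$. For each such component $Z$, the equidimensionality hypothesis in (\ref{prop:charequidima}) gives $\dim Z = \dim W + e$, where $W$ is the irreducible component of $Y$ in which $f(Z)$ is dense. Since $g\rvert_Z$ is quasi-finite, $\dim \overline{g(Z)} = \dim Z$, and $\overline{g(Z)}$ is contained in the irreducible component $\bA^e_W \subseteq \bA^e_Y$, which has dimension $\dim W + e$; by dimension count $\overline{g(Z)} = \bA^e_W$, and taking unions over components gives density in $\bA^e_Y$.

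The main obstacle is the simultaneous control in the shrinking step of the hard direction: Noether normalization only produces quasi-finiteness \emph{at the point $x$}, whereas (\ref{prop:charequidimb}) requires it on a whole neighborhood together with the density statement for all irreducible components through $x$. Making these compatible means one cannot simply shrink to the quasi-finite locus, but must intersect with an open on which the equidimensionality of (\ref{prop:charequidima}) guarantees that nearby fibers behave generically like the fiber at $y$; this is where the full equidimensionality hypothesis (not merely the dimension being $e$ at the single point $x$) is used, via the openness and constructibility results of \cite[\S13]{EGAIV3}.
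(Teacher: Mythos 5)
Note first that the paper does not reprove this Proposition: it is quoted verbatim from \cite[Proposition 13.3.1]{EGAIV3}, and the paper's own contribution is the strengthening in Proposition \ref{prop:charequidimac}, whose proof adapts the EGA argument via Lemma \ref{lem:egaiv13311}. Your architecture for \((\ref{prop:charequidima}) \Rightarrow (\ref{prop:charequidimb})\) --- Noether normalization applied to the fibre over \(y\), lifting the coordinates to \(S\) after clearing denominators, and shrinking to the quasi-finite locus of \(g\) via Chevalley's semicontinuity theorem --- is exactly the EGA route. The genuine gap is in your density step. You assert \(\dim Z = \dim W + e\), then \(\dim \overline{g(Z)} = \dim Z\) because \(g\rvert_Z\) is quasi-finite, and finally \(\overline{g(Z)} = \bA^e_W\) because the two irreducible closed sets have equal dimension. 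None of these survives for arbitrary schemes, and the Proposition carries no Noetherian, catenary, or finite-dimensionality hypotheses. Quasi-finiteness only yields the one-sided bound \(\dim Z \le \dim \overline{g(Z)}\) by \cite[Th\'eor\`eme 4.1.2]{EGAIV2}; equality already fails for the dominant quasi-finite morphism \(\Spec(K) \to \Spec(R)\) with \(R\) a discrete valuation ring and \(K\) its fraction field. Likewise \((\ref{prop:charequidima})\) only gives \(\dim Z \le \dim W + e\) in general (the reverse inequality is the dimension formula and needs catenarity-type hypotheses), and it gives nothing when \(\dim W = \infty\). Finally, the inference ``irreducible closed subset of the same dimension equals the ambient irreducible scheme'' is the statement of \cite[Corollaire 4.1.2.1]{EGAIV2}, which is a theorem about schemes of finite type over a field, not about \(\bA^e_W\) for arbitrary irreducible \(W\).

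The missing idea is to run the entire dimension count inside a single fibre over a maximal point of \(Y\), where everything is of finite type over a field and dimension theory is reliable. This is what the proof of Proposition \ref{prop:charequidimac} does: one first checks, using \cite[Proposition 6.1.7\((i)\)]{EGAInew}, that a maximal point \(x_\lambda\) of \(U\) maps under \(f\) to a maximal point \(y_\alpha\) of \(Y\); then, assuming \(z_\lambda = g(x_\lambda)\) is not maximal in \(\bA^e_Y\), it is not maximal in the integral \(e\)-dimensional fibre \(p^{-1}(y_\alpha) \cong \bA^e_{\kappa(y_\alpha)}\), so \(\dim \overline{\{z_\lambda\}} < e\) by \cite[Corollaire 4.1.2.1]{EGAIV2}; on the other hand \(\overline{\{x_\lambda\}} \subseteq f^{-1}(y_\alpha)\) is an irreducible component of a fibre that is equidimensional of dimension \(e\) by hypothesis \((\ref{prop:charequidima})\), and it maps quasi-finitely into \(\overline{\{z_\lambda\}}\), forcing \(\dim \overline{\{z_\lambda\}} \ge e\) --- a contradiction. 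You should replace your global count with this fibrewise argument. (Your sketch of \((\ref{prop:charequidimb}) \Rightarrow (\ref{prop:charequidima})\) is essentially correct, but note that the lower bound on the dimension of the components of the fibres comes from Chevalley's lower semicontinuity for fibres of dominant morphisms of irreducible schemes, not from ``dimension preservation for quasi-finite morphisms,'' which again only supplies the upper bound.)
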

\begin{citeddef}[{\citeleft\citen{EGAIV3}\citemid D\'efinition 13.2.2
  and D\'efinition 13.3.2\citepunct
  \citen{EGAIV4}\citemid Err\textsubscript{IV}, 34 and 35\citeright}]
  \label{def:equidimensionalmor}
  Let \(f\colon X \to Y\) be a morphism of schemes that is locally of finite
  type.
  Let \(x \in X\) be a point.
  \begin{enumerate}[label=\((\roman*)\)]
    \item Suppose that \(X\) and \(Y\) are irreducible and that \(f\) is
      dominant.
      Denote by \(\eta\) the generic point of \(Y\).
      We say that \(f\) is \textsl{equidimensional at \(x\)} if 
      \begin{equation}\label{eq:equidimensionalmor}
        \dim_x\bigl(f^{-1}\bigl(f(x)\bigr)\bigr) = \dim\bigl(f^{-1}(\eta)\bigr).
      \end{equation}
    \item In general,
      we say that \(f\) is \textsl{equidimensional at \(x\)} if one of the
      equivalent conditions in Proposition \ref{prop:charequidim} holds.
  \end{enumerate}
  The two definitions are equivalent for dominant morphisms locally of finite
  type between irreducible schemes by \cite[pp.\ 196--197]{EGAIV3}.
  \par We say that \(f\) is \textsl{locally equidimensional} if \(f\) is
  equidimensional at every \(x \in X\).
  We say that \(f\) is \textsl{equidimensional} if \(f\) is locally
  equidimensional and if the fibers \(f^{-1}(f(x))\) are all
  equidimensional.
\end{citeddef}

\section{Factoring locally equidimensional morphisms}\label{sect:factor}
We show that locally equidimensional morphisms have the following factorization
property.
This property is stronger than condition 
\((\ref{prop:charequidimb})\) of Proposition \ref{prop:charequidim}.
\begin{proposition}\label{prop:charequidimac}
  Let \(f\colon X \to Y\) be a morphism of schemes that is locally of finite
  type.
  Let \(x \in X\) be a point and set \(y = f(x)\).
  The morphism \(f\) is equidimensional at \(x\) if and only if the following
  condition holds:
  \begin{enumerate}[label=\((\alph*)\),start=3,ref=\alph*]
    \item\label{prop:charequidimc}
      For every generic point \(x_0 \in f^{-1}(y)\) of an irreducible
      component of \(f^{-1}(y)\) containing \(x\),
      there exists an integer \(e\), an open neighborhood \(U\) of \(x\), and
      a factorization
      \[
        \begin{tikzcd}[column sep={between origins,3em}]
          U\vphantom{\bA^e_Y}
          \arrow{rr}{g}\arrow{dr}[swap,pos=0.25]{\vphantom{p}f\rvert_U} &
          & \bA^e_Y\arrow{dl}[pos=0.25]{\vphantom{f\rvert_U}p}\\
          & Y
        \end{tikzcd}
      \]
      where
      \(g\) is quasi-finite and
      \(p\) is the projection map, such that
      the image of every irreducible component of \(U\) under \(g\) is
      dense in \(\bA^e_Y\) and \(g\) maps \(x_0\) to the generic point of
      \(p^{-1}(y)\).
  \end{enumerate}
\end{proposition}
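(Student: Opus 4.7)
The plan is to use Proposition \ref{prop:charequidim} to reduce to a dimension count. The reverse direction, that condition \((\ref{prop:charequidimc})\) implies equidimensionality at \(x\), is essentially immediate: any factorization provided by \((\ref{prop:charequidimc})\) already satisfies the hypotheses of condition \((\ref{prop:charequidimb})\) of Proposition \ref{prop:charequidim}, so equidimensionality at \(x\) follows.

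For the forward direction, I would start with a factorization \(U \to \bA^e_Y \to Y\) given by Proposition \ref{prop:charequidim}\((\ref{prop:charequidimb})\). After shrinking \(U\) if necessary, I may also arrange that condition \((\ref{prop:charequidima})\) of Proposition \ref{prop:charequidim} holds on the same \(U\), since \((\ref{prop:charequidima})\) is inherited by any open subneighborhood of \(x\). Now fix any generic point \(x_0\) of an irreducible component \(Z\) of \(f^{-1}(y)\) that contains \(x\). Then \(U \cap Z\) is a nonempty open subset of the irreducible space \(Z\), hence contains its generic point \(x_0\), and \(U \cap Z\) is an irreducible component of \(U \cap f^{-1}(y)\) with generic point \(x_0\).

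The heart of the argument is a dimension count on the restriction \(g\rvert_{U \cap Z}\colon U \cap Z \to p^{-1}(y) \cong \bA^e_{\kappa(y)}\). By Proposition \ref{prop:charequidim}\((\ref{prop:charequidima})\), the fiber \(U \cap f^{-1}(y)\) is equidimensional of dimension \(e\), so \(\dim(U \cap Z) = e\). Since \(g\) is quasi-finite, so is \(g\rvert_{U \cap Z}\). Let \(W\) denote the closure of the image \(g(U \cap Z)\) in \(\bA^e_{\kappa(y)}\); then \(W\) is irreducible, and the fiber-dimension formula applied to the dominant quasi-finite morphism \(U \cap Z \to W\) yields \(\dim W = \dim(U \cap Z) = e\). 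Because \(\bA^e_{\kappa(y)}\) is itself irreducible of dimension \(e\), this forces \(W = \bA^e_{\kappa(y)}\), and since generic points map to generic points of image closures, \(g(x_0)\) is the generic point of \(p^{-1}(y)\), as required.

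The main technical subtlety I anticipate is ensuring that conditions \((\ref{prop:charequidima})\) and \((\ref{prop:charequidimb})\) of Proposition \ref{prop:charequidim} can be arranged simultaneously on a common neighborhood \(U\) of \(x\); once that is in place, the dimension count is essentially forced and no Bertini-type argument is needed. It is worth recording that a single factorization works uniformly for all relevant \(x_0\), so the quantifier structure in \((\ref{prop:charequidimc})\) imposes no additional constraint beyond what is already visible at \(x\).
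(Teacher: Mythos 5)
Your proof is correct, but it takes a genuinely different route from the paper. The paper does not deduce the extra condition a posteriori; instead it reproves the relative Noether normalization of \cite[Lemme 13.3.1.1]{EGAIV3} (Lemma \ref{lem:egaiv13311}), choosing the normalizing coordinates adapted to a chain of primes through the prime of \(x_0\) so that \(\fq_{x_0} \cap \kappa(y)[t_1,\ldots,t_e] = (0)\) by construction, and then separately verifies the density condition of \((\ref{prop:charequidimb})\) by the maximal-point argument. You instead take \emph{any} factorization supplied by Proposition \ref{prop:charequidim}\((\ref{prop:charequidimb})\), arrange \((\ref{prop:charequidima})\) on the same neighborhood, and show the generic-point condition is automatic: \(U \cap Z\) is an \(e\)-dimensional component of the fiber, its image closure under the quasi-finite map \(g_y\) again has dimension \(e\), hence equals \(\bA^e_{\kappa(y)}\), and \(g(x_0)\) is its generic point. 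This dimension count is sound (it is the same dimension-theoretic input, essentially \cite[Th\'eor\`eme 4.1.2]{EGAIV2}, that the paper uses to check density), and it buys you two things: you avoid redoing Noether normalization entirely, and you get the stronger conclusion that one factorization works uniformly for all \(x_0\), whereas the paper's construction depends on \(x_0\). What the paper's route buys is the extra normalization data in Lemma \ref{lem:egaiv13311} (finiteness and surjectivity of \(g_y\) on the whole fiber), though this is not recorded in the proposition or used in Theorem \ref{thm:splinterequidimlocal}. The one detail you should pin down is the point you flag yourself: when you superimpose \((\ref{prop:charequidima})\) and \((\ref{prop:charequidimb})\) on a common \(U\), the two integers \(e\) must agree. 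They do, because any \(e\) for which \((\ref{prop:charequidima})\) holds equals \(\dim_x(f^{-1}(y))\), and \((\ref{prop:charequidimb})\) implies \((\ref{prop:charequidima})\) with the same \(e\) by Proposition \ref{prop:charequidim}; with that observed, both conditions pass to \(U_a \cap U_b\) and your argument closes.
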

The main ingredient in the proof of Proposition \ref{prop:charequidim}
is the following result, which strengthens the conclusion of \cite[Lemme
13.3.1.1]{EGAIV3}
by giving control over the image of a generic
point \(x_0 \in f^{-1}(y)\) under \(g\).
\begin{lemma}\label{lem:egaiv13311}
  Let \(Y = \Spec(A)\) and \(X = \Spec(B)\) be two affine schemes.
  Consider a morphism \(f\colon X \to Y\) of finite type and a point \(y \in
  f(X)\).
  Set \(e = \dim(f^{-1}(y))\) and let \(x_0\) be the generic point of an
  \(e\)-dimensional irreducible component of \(f^{-1}(y)\).
  Then, there exists a factorization
  \begin{equation}\label{eq:egaiv13311}
    \begin{tikzcd}[column sep={between origins,3em}]
      X\vphantom{\bA^e_Y}
      \arrow{rr}{g}\arrow{dr}[swap,pos=0.25]{\vphantom{p}f} &
      & \bA^e_Y\arrow{dl}[pos=0.25]{\vphantom{f}p}\\
      & Y
    \end{tikzcd}
  \end{equation}
  where \(p\) is the projection map such that the base change
  \[
    g_y\colon f^{-1}(y) \longrightarrow \bA^e_{\kappa(y)}
  \]
  is finite and maps \(x_0\) to the generic point of \(\bA^e_{\kappa(y)}\).
  Moreover, for all factorizations \eqref{eq:egaiv13311} such that the morphism
  \(g_y\) is finite, the morphism \(g_y\) is surjective
  and there exists an open neighborhood \(U\) of \(f^{-1}(y)\) in \(X\) such
  that \(g\rvert_U\) is quasi-finite.
\end{lemma}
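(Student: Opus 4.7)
The plan is to adapt the proof of \cite[Lemme 13.3.1.1]{EGAIV3}, whose classical argument already produces a factorization \eqref{eq:egaiv13311} with $g_y$ finite but gives no control over where $g_y$ sends the chosen generic point $x_0$. The refinement needed is a relative form of Noether normalization that ensures the normalizing parameters can be chosen to remain algebraically independent modulo the prime corresponding to $x_0$. This is the main obstacle; once it is in hand, the rest of the argument follows along standard lines.

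\emph{Refined Noether normalization.} I would first establish that if $R$ is a finitely generated $k$-algebra of dimension $e$ and $\mathfrak{p}_0 \subseteq R$ is a prime with $\dim(R/\mathfrak{p}_0) = e$, then there exist $y_1, \dots, y_e \in R$ such that $R$ is finite over $k[y_1, \dots, y_e]$ \emph{and} the images of the $y_i$ in $R/\mathfrak{p}_0$ are algebraically independent over $k$. The proof mirrors the classical Noether normalization, by induction on the minimal number $n \geq e$ of $k$-algebra generators of $R$. In the inductive step $n > e$, starting from generators $x_1, \dots, x_n$ and a nonzero element of the kernel of $k[X_1, \dots, X_n] \twoheadrightarrow R$, the standard substitution $x_i \mapsto x_i - x_n^{m_i}$ for suitably chosen exponents $m_i$ realizes $R$ as finite over the subring $R' \subseteq R$ generated by $y_i := x_i - x_n^{m_i}$ for $i < n$. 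The induction hypothesis applies to $R'$ with the contracted prime $\mathfrak{p}_0 \cap R'$, since $\dim R' = e$ and $\dim R'/(\mathfrak{p}_0 \cap R') = e$ (both by finiteness of $R$ over $R'$), and algebraic independence modulo $\mathfrak{p}_0$ for the resulting parameters is preserved by the injection $R'/(\mathfrak{p}_0 \cap R') \hookrightarrow R/\mathfrak{p}_0$.

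\emph{Construction of $g$ and the remaining properties.} I would then apply the refined normalization to $R = B \otimes_A \kappa(y)$ with $\mathfrak{p}_0$ the prime corresponding to $x_0$, obtaining $\bar{t}_i \in B \otimes_A \kappa(y)$ with the required properties. To lift the $\bar{t}_i$ to $B$, I would write each $\bar{t}_i = b_i/s_i$ with $b_i \in B$ and $s_i \in A \setminus \mathfrak{p}$, where $\mathfrak{p} \subseteq A$ is the prime corresponding to $y$, and take $t_i := b_i$. The image of $t_i$ in $B \otimes_A \kappa(y)$ then equals $\bar{s}_i \bar{t}_i$, differing from $\bar{t}_i$ by a unit of $\kappa(y)$, and hence preserves both the finiteness of $\kappa(y)[\bar{t}_1, \dots, \bar{t}_e] \hookrightarrow B \otimes_A \kappa(y)$ and the algebraic independence modulo $\mathfrak{p}_0$. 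The assignment $T_i \mapsto t_i$ then defines a morphism $g \colon X \to \bA^e_Y$ as required. For the moreover part, surjectivity of any finite $g_y$ follows because its image is a closed subset of $\bA^e_{\kappa(y)}$ of dimension $e = \dim \bA^e_{\kappa(y)}$, and must therefore equal the irreducible scheme $\bA^e_{\kappa(y)}$. Finally, since $f = p \circ g$ forces $g^{-1}(g(x)) \subseteq f^{-1}(y)$ for every $x \in f^{-1}(y)$, finiteness of $g_y$ implies $g$ is quasi-finite at every point of $f^{-1}(y)$, and the openness of the quasi-finite locus of a morphism of finite type \cite[Th\'eor\`eme 13.1.4]{EGAIV3} yields an open neighborhood $U \supseteq f^{-1}(y)$ on which $g$ is quasi-finite.
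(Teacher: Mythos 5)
Your proposal is correct, and its overall strategy coincides with the paper's: both hinge on refining Noether normalization so that the normalizing parameters stay algebraically independent modulo the prime of $x_0$, after which the lifting to $B$ (clearing denominators from $A\setminus\mathfrak{p}$, which only changes the parameters by units of $\kappa(y)$) and the ``moreover'' part (dimension count for surjectivity, openness of the quasi-finite locus via Chevalley semicontinuity) proceed identically. The one genuine difference is how the refined normalization is obtained. The paper quotes Bourbaki's strong form of Noether normalization, which adapts the coordinates to an entire chain of primes $\mathfrak{q}=\mathfrak{q}_0\subsetneq\cdots\subsetneq\mathfrak{q}_e$ so that $\mathfrak{q}_i\cap C'$ is generated by the first $h(i)$ variables, and then uses Going Up and Incomparability to force $h(i)=i$ and hence $\mathfrak{q}\cap C'=(0)$. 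You instead prove from scratch a bespoke single-prime version (parameters algebraically independent modulo one prime $\mathfrak{p}_0$ of maximal coheight) by the standard induction on the number of generators, using that the contraction $\mathfrak{p}_0\cap R'$ again has coheight $e$ and that $R'/(\mathfrak{p}_0\cap R')\hookrightarrow R/\mathfrak{p}_0$ preserves algebraic independence. Your route is more self-contained and proves exactly what is needed; the paper's route is shorter on the page but imports a stronger black box and needs the Cohen--Seidenberg argument to extract the single conclusion $\mathfrak{q}\cap C'=(0)$ from it. Either works; if you write yours up, do spell out the base case $n=e$ (where $R$ is forced to be a polynomial ring and $\mathfrak{p}_0=(0)$), and note that the reference for openness of the quasi-finite locus is \cite[Corollaire 13.1.4]{EGAIV3} rather than Th\'eor\`eme 13.1.4.
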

\begin{proof}
  Let \(\fp\) be the ideal corresponding to \(y \in \Spec(A)\).
  Note that the ring \(B \otimes_A \kappa(\fp)\) is a finite type
  \(\kappa(\fp)\)-algebra.
  \par We want to apply the Noether normalization theorem to \(f^{-1}(y)\).
  Let \(\fq \subseteq B \otimes_A \kappa(\fp)\) be the minimal ideal
  corresponding to \(x_0 \in f^{-1}(y)\).
  Since
  \[
    \dim\biggl(\frac{B \otimes_A \kappa(\fp)}{\fq}\biggr) = e,
  \]
  there exists a chain of prime
  ideals
  \begin{equation}\label{eq:chain}
    \fq = \fq_0 \subsetneq \fq_1 \subsetneq \cdots \subsetneq \fq_e
  \end{equation}
  in \(B \otimes_A \kappa(\fp)\).
  By the version of the Noether normalization theorem in
  \cite[Chapter V, \S3, no.\ 1, Theorem 1]{Bou72},
  there exists a finite set \(t_1,t_2,\ldots,t_r \in B \otimes_A
  \kappa(\fp)\) of elements that are algebraically independent over
  \(\kappa(\fp)\) such that
  \begin{equation}\label{eq:noethermodfin}
    C' \coloneqq \kappa(\fp)[t_1,t_2,\ldots,t_r] \subseteq B \otimes_A
    \kappa(\fp)
  \end{equation}
  is a module-finite extension and such that
  \(\fq_i \cap C' = (x_1,x_2,\ldots,x_{h(i)})\)
  for a sequence of integers \(h(0) \le h(1) \le \cdots \le h(e)\).
  By Incomparability \cite[Theorem 4]{CS46}, since \eqref{eq:chain} consists of
  strict inclusions, the sequence \(h(0),h(1),\ldots,h(e)\)
  must be strictly increasing.
  Since \(\dim(C') = r = e\) by Going Up and Incomparability
  \cite[Theorem 3 and Theorem 4]{CS46}
  and since the chain \eqref{eq:chain} has length \(e\), the
  only possibility is that \(h(i) = i\) for all \(i\).
  In particular, we see that \(\fq \cap C' = (0)\).
  \par Next, we construct the factorization \eqref{eq:egaiv13311}.
  Since
  \[
    B \otimes_A \kappa(\fp) \cong (B/\fp B) \otimes_{A/\fp} \kappa(\fp),
  \]
  we can write each \(t_i\) as a \(\kappa(\fp)\)-linear combination of elements
  in \(B/\fp B\).
  After clearing denominators, which are nonzero elements in \(A/\fp\) appearing
  in the coefficients from \(\kappa(\fp)\),
  we may assume that the \(t_i\) are images of
  elements \(s_i \in B\).
  Consider the \(A\)-algebra map
  \[
    \begin{tikzcd}[row sep=0,column sep=1.475em]
      u\colon &[-2.335em] A[T_1,T_2,\ldots,T_e] \rar & B\\
      & T_i \rar[mapsto] & s_i
    \end{tikzcd}
  \]
  and let \(g\colon X \to \bA^e_Y\) be the corresponding morphism of schemes.
  This morphism \(g\) fits into the commutative diagram \eqref{eq:egaiv13311}.
  Since the base change \(g_y\) is the morphism associated to the
  \(\kappa(\fp)\)-algebra map \eqref{eq:noethermodfin}, we are done.
  \par The ``Moreover'' statement in the lemma follows by the same proof as in
  \cite[Lemme 13.3.1.1]{EGAIV3}.
  We repeat the proof from \cite{EGAIV3}
  for completeness.
  For any factorization \eqref{eq:egaiv13311} such that \(g_y\) is finite, the
  fact that
  \[
    \dim\bigl(f^{-1}(y)\bigr) = e = \dim\bigl(\bA^e_{\kappa(y)}\bigr)
  \]
  implies that \(g_y\) is surjective \cite[Corollaire 4.1.2.1 and Corollaire
  5.4.2]{EGAIV2}.
  By Chevalley's semicontinuity theorem \cite[Corollaire
  13.1.4]{EGAIV3}, there exists an open neighborhood \(U\) of \(f^{-1}(y)\) in
  \(X\) such that \(g\rvert_U\) is quasi-finite.
\end{proof}
We can now prove Proposition \ref{prop:charequidimac} by adapting the proof of
\cite[Proposition 13.3.1]{EGAIV3}.
\begin{proof}[Proof of Proposition \ref{prop:charequidimac}]
  Throughout this proof, \((\ref{prop:charequidima})\) and
  \((\ref{prop:charequidimb})\) refer to the two conditions in
  Proposition \ref{prop:charequidim}.
  The implication \((\ref{prop:charequidimc}) \Rightarrow
  (\ref{prop:charequidimb})\) is clear.\medskip
  \par It therefore suffices to show \((\ref{prop:charequidima}) \Rightarrow
  (\ref{prop:charequidimc})\).
  We adapt
  the proof of
  \((\ref{prop:charequidima}) \Rightarrow (\ref{prop:charequidimb})\)
  of Proposition \ref{prop:charequidim} that appears in \cite{EGAIV3}.
  We claim we can replace \(Y\) by an affine open neighborhood \(V\) of \(y\).
  Given a factorization over \(V\) satisfying \((\ref{prop:charequidimc})\),
  we can put this factorization in the commutative diagram
  \[
    \begin{tikzcd}[column sep={3em,between origins}]
      U\vphantom{\bA^e_Y}
      \arrow{rr}{g}
      \arrow{dr}[swap,pos=0.25]{\vphantom{p\rvert_{\bA^e_V}}f\rvert_U} &
      & \bA^e_V\arrow{dl}[pos=0.25]{\vphantom{f\rvert_U}p\rvert_{\bA^e_V}}
      \arrow[hook]{rr} &[-0.75em] &
      \bA^e_Y\arrow{dl}[pos=0.25]{\vphantom{f\rvert_{U\bA^e_V}}p}\\
      & V \arrow[hook]{rr} & & Y
    \end{tikzcd}
  \]
  as the left triangle.
  The outer triangle then satisfies the conclusion of
  \((\ref{prop:charequidimc})\).
  Moreover, we can shrink \(X\) around \(x\)
  to assume that \(X\) is affine and \(U =
  X\) in \((\ref{prop:charequidima})\).
  \par We can now apply Lemma \ref{lem:egaiv13311} to find a factorization
  \eqref{eq:egaiv13311}.
  We want to show that every maximal point \(x_\lambda \in U\) maps to a maximal
  point in \(\bA^e_Y\).
  Since the image of
  every irreducible component \(U_\lambda\) of \(U\) under \(f\) is dense
  in an irreducible component \(Y_\alpha\) of \(Y\), by applying
  \cite[Proposition 6.1.7\((i)\)]{EGAInew} to the morphism \(U_\lambda \to
  Y_\alpha\) of integral schemes obtained from \cite[Proposition 4.6.2]{EGAInew},
  we see that every maximal point \(x_\lambda \in
  U\) maps to a maximal point \(y_\alpha \in Y\).
  \par Fix a maximal point \(x_\lambda \in U\) and set \(g(x_\lambda) =
  z_\lambda\).
  To show that \(z_\lambda\) is maximal in \(\bA^e_Y\), we proceed
  by contradiction.
  Suppose that \(z_\lambda \in \bA^e_Y\) is not maximal.
  We know that \(g\) restricts to a quasi-finite morphism
  \[
    h\colon U \cap f^{-1}(y_\alpha) \longrightarrow p^{-1}(y_\alpha).
  \]
  Note that
  \(p^{-1}(y_\alpha) \cong
  \bA^e_{\kappa(y_\alpha)}\)
  is integral of dimension \(e\).
  Since \(z_\lambda\) is not maximal in \(\bA^e_Y\),
  \cite[Chapitre 0, Proposition 2.1.13]{EGAInew} implies that \(z_\lambda\)
  is not maximal in \(p^{-1}(y_\alpha)\).
  Thus, the closure \(\overline{\{z_\lambda\}} \subseteq p^{-1}(y_\alpha)\) is
  of dimension \(<e\) by \cite[Corollaire 4.1.2.1]{EGAIV2}.
  On the other hand, the restriction of \(h\) to \(\overline{\{x_\lambda\}}
  \subseteq f^{-1}(y_\alpha)\) factors as
  \[
    \overline{\{x_\lambda\}} \longrightarrow
    \overline{\{z_\lambda\}} \hooklongrightarrow p^{-1}(y_\alpha)
  \]
  by \cite[Proposition 4.6.2]{EGAInew}.
  Since \(h\) is quasi-finite, we know that \(\overline{\{x_\lambda\}} \to
  \overline{\{z_\lambda\}}\) is quasi-finite.
  By \cite[Th\'eor\`eme 4.1.2]{EGAIV2}, we conclude that
  \(\dim(\overline{\{z_\lambda\}}) \ge e\), a contradiction.
\end{proof}

\section{Proofs of Theorems \texorpdfstring{\ref{thm:mainaffine} and
\ref{thm:splinterequidim}}{\ref*{thm:mainaffine} and
\ref*{thm:splinterequidim}}}
\label{sect:proofs}
We first prove the following local version of the forward direction of
Theorem \ref{thm:splinterequidim}.
\begin{theorem}\label{thm:splinterequidimlocal}
  Let \(f\colon X \to Y\) be a morphism locally of finite type between locally
  Noetherian schemes.
  Let \(x \in X\) be a point such that \(f\) is equidimensional at \(x\), and
  set \(y = f(x)\).
  If \(\cO_{Y,y}\) is a splinter, then the map
  \(\cO_{Y,y} \rightarrow \cO_{X,x}\) is pure.
\end{theorem}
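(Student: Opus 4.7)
I'll prove the theorem by combining the factorization result of Proposition \ref{prop:charequidimac} with Henselization and Lyu's theorem that polynomial extensions of splinters remain splinters at their generic fibers. After reducing to the affine case $Y = \Spec(R)$, $X = \Spec(S)$, with primes $\fp \subseteq R$ and $\fq \subseteq S$ corresponding to $y$ and $x$, the first step is to replace $x$ by a generic point of an irreducible component of $f^{-1}(y)$ so that Proposition \ref{prop:charequidimac} can be applied with $x$ itself as the distinguished fiber-generic point. Choose a generic point $x_0$ of an irreducible component of $f^{-1}(y)$ containing $x$, with corresponding prime $\fq_0 \subseteq \fq$. Since $S_{\fq_0}$ is a localization of $S_\fq$, the factorization
\[
  R_\fp \longrightarrow S_\fq \longrightarrow S_{\fq_0}
\]
together with the general fact that purity of a composition implies purity of its first factor reduces the problem to proving purity at $x_0$. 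Equidimensionality of $f$ at $x$ persists at $x_0$ because any open neighborhood of $x$ also contains $x_0$, so we may relabel and assume $x$ is itself a generic point of an irreducible component of $f^{-1}(y)$.

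Applying Proposition \ref{prop:charequidimac}, after further shrinking $X$ we obtain a factorization $f = p \circ g$ where $p \colon \bA^e_Y \to Y$ is the projection, $g$ is quasi-finite, and $g(x)$ equals the generic point $\mathfrak{n} \coloneqq \fp P$ of $p^{-1}(y)$ inside $P \coloneqq R[T_1, \ldots, T_e]$. The local flat map $R_\fp \to P_\mathfrak{n}$ is faithfully flat, hence pure, so it suffices to show that $P_\mathfrak{n} \to S_\fq$ is pure. By \cite[Theorem 1.2]{Lyu25}, the ring $P_\mathfrak{n}$ is a splinter since $R_\fp$ is and $\mathfrak{n}$ is the generic point of the fiber of $p$ over $\fp$. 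To convert the quasi-finite map to a module-finite one, introduce the Henselization $A \coloneqq (P_\mathfrak{n})^h$, which remains a splinter by \cite[Theorem B]{DT23}, and form the base change $S' \coloneqq A \otimes_{P_\mathfrak{n}} S_\fq$. Zariski's Main Theorem applied over the Henselian base $A$ makes $A \to S'$ module-finite.

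For injectivity, density of the image of $g$ forces $\ker(P \to S)$ into the nilradical of $P$, so $\ker(P_\mathfrak{n} \to S_\fq)$ lies in the nilradical of $P_\mathfrak{n}$; since $P_\mathfrak{n}$ is a splinter and hence reduced, $P_\mathfrak{n} \hookrightarrow S_\fq$, and base changing along the faithfully flat $P_\mathfrak{n} \to A$ yields $A \hookrightarrow S'$. A module-finite injective map is surjective on spectra, so by the splinter property of $A$ the map $A \to S'$ splits and is in particular pure. Hence the composition $P_\mathfrak{n} \to A \to S'$ is pure; it agrees with $P_\mathfrak{n} \to S_\fq \to S'$, and since $S_\fq \to S'$ is faithfully flat as the base change of $P_\mathfrak{n} \to A$, purity of this composition forces purity of the first factor $P_\mathfrak{n} \to S_\fq$. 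The key step is the use of Proposition \ref{prop:charequidimac} to arrange $g(x) = \mathfrak{n}$: without this, \cite[Theorem 1.2]{Lyu25} would not apply and we could not conclude that $P_\mathfrak{n}$ is a splinter, so the bootstrap via Henselization would have no starting point.
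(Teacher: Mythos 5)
Your overall strategy is the same as the paper's: pass to a maximal point of the fiber, use Proposition \ref{prop:charequidimac} to arrange that \(g(x)\) is the generic point \(\mathfrak{n}\) of \(p^{-1}(y)\), apply Lyu's theorem to conclude \(P_{\mathfrak{n}}\) is a splinter, Henselize, and split a module-finite extension. There is, however, a genuine gap at the Henselization step: the ring \(S' = A \otimes_{P_{\mathfrak{n}}} S_\fq\) is \emph{not} module-finite over \(A\) in general, and Zariski's Main Theorem does not assert this. The structure theorem over a Henselian local ring \cite[Th\'eor\`eme 18.5.11]{EGAIV4} decomposes the finite type \(A\)-algebra \(A \otimes_P S\) into a direct factor that is finite over \(A\) and a factor whose spectrum misses the closed fiber of \(\Spec(A)\); your \(S'\) is the localization of \(A \otimes_P S\) at the image of \(S \setminus \fq\), which retains a nontrivial (and non-finite) piece of the second factor. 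Concretely, take \(R = k[s]\), \(y = (s)\), \(e = 0\), and \(S = k[s,u]/(u^2-u-s) \cong k[u]\) with \(\fq = (u)\). Then \(P_{\mathfrak{n}} = k[s]_{(s)}\) and \(S_\fq = k[u]_{(u)}\) is quasi-finite but not finite over \(P_{\mathfrak{n}}\) (it contains \(1/(u-1)\), which is not integral over \(k[s]_{(s)}\)); writing \(u^2-u-s = (u-a)(u-b)\) over \(A = P_{\mathfrak{n}}^h\) with \(a \equiv 0\), \(b \equiv 1\), one computes \(A \otimes_{P_{\mathfrak{n}}} S_\fq \cong A \times \operatorname{Frac}(A)\), because inverting the image \(b-1 = s/b\) of \(u - 1\) inverts \(s\) in the second factor. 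This is not a finite \(A\)-module, so the splinter property of \(A\) cannot be invoked to split \(A \to S'\); since that splitting is the load-bearing step, the argument as written does not close.

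The repair is exactly the paper's maneuver: base change the scheme \(U\) (not the local ring \(S_\fq\)) to \(\Spec(A)\), choose a point \(x'\) lying over both \(x\) and the closed point of \(\Spec(A)\), and replace your \(S'\) by \(\cO_{U',x'}\), which by \cite[Th\'eor\`eme 18.5.11\((c)\)]{EGAIV4} is a direct factor of \(A \otimes_P S\) finite over \(A\). The composite \(P_{\mathfrak{n}} \to S_\fq \to \cO_{U',x'}\) still exists, and your final step is then fine; indeed it is simpler than you make it, since purity of a composite of ring maps always implies purity of the first factor, so no flatness of \(S_\fq \to S'\) is needed. Two smaller points. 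First, your inference from \(\ker(P \to S) \subseteq \operatorname{nil}(P)\) to \(\ker(P_{\mathfrak{n}} \to S_\fq) \subseteq \operatorname{nil}(P_{\mathfrak{n}})\) is too quick: localizing \(S\) at \(\fq\) can enlarge the kernel. You need the full strength of Proposition \ref{prop:charequidimac} --- every irreducible component of \(U\), in particular every one through \(x\), dominates \(\bA^e_Y\) --- so that every minimal prime of \(S_\fq\) contracts to \((0)\) in the domain \(P_{\mathfrak{n}}\); alternatively, first reduce to \(U\) integral as the paper does. Second, once \(S'\) is corrected to the local factor, its injectivity over \(A\) no longer follows from flat base change of \(P_{\mathfrak{n}} \hookrightarrow S_\fq\) alone (projecting onto a factor can create a kernel), so that step also needs the dominance argument the paper uses.
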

As mentioned in \S\ref{sect:intro},
related statements in equal characteristic zero are proved in \cite[Lemma
2.7]{CGM16} and \cite[Proof of Corollary A.2]{Zhu24}.
In addition to Theorem \ref{thm:splinterequidim} applying more generally
to morphisms of
schemes in arbitrary characteristic, the key difference between Theorem
\ref{thm:splinterequidim}
and the results in \cite{CGM16} and \cite{Zhu24}
is that both \cite{CGM16} and \cite{Zhu24} state that \(f\) is partially pure,
while Theorem \ref{thm:splinterequidim} shows the stronger conclusion that \(f\)
is strongly pure.
In \cite{CGM16}, this is because the authors work with closed points \(x \in
f^{-1}(y)\) to apply Bertini-type theorems.
The proof of \cite{Zhu24} can be modified to show that \(f\) is strongly pure
(still assuming that \(Y\) is of equal characteristic zero).
\begin{proof}[Proof of Theorem \ref{thm:splinterequidimlocal}]
  Let \(x_0 \in f^{-1}(y)\) be the generic point of an irreducible component of
  \(f^{-1}(y)\) containing \(x\).
  By Proposition \ref{prop:charequidimac}, there exist an integer \(e\), a
  neighborhood \(U\) of \(x\), and a factorization
  \begin{equation}\label{eq:splinterequidimfactor}
    \begin{tikzcd}[column sep={between origins,3em}]
      U\vphantom{\bA^e_Y}
      \arrow{rr}{g}\arrow{dr}[swap,pos=0.25]{\vphantom{p}f\rvert_U} &
      & \bA^e_Y\arrow{dl}[pos=0.25]{\vphantom{f\rvert_U}p}\\
      & Y
    \end{tikzcd}
  \end{equation}
  where \(g\) is quasi-finite and \(p\) is the projection map, such that
  the image of every irreducible component of \(U\) under \(g\) is
  dense in \(\bA^e_Y\) and \(g\) maps \(x_0\) to the generic point of
  \(p^{-1}(y)\).
  To show that \(\cO_{Y,y} \to \cO_{X,x}\) is pure, it suffices to show that the
  composition
  \[
    \cO_{Y,y} \longrightarrow \cO_{X,x} \longrightarrow \cO_{X,x_0}
  \]
  is pure.
  We may therefore replace \(x\) by \(x_0\) to assume that \(x\) is a maximal in
  \(f^{-1}(y)\).
  \medskip
  \par We now proceed in steps.
  \begin{step}
    We may assume that \(X\) and \(Y\) are integral and affine.
  \end{step}
  After shrinking \(Y\) around \(y\), we may assume that \(Y = \Spec(A)\) is
  affine.
  Since \(\cO_{Y,y}\) is a domain \cite[Lemma 2.1]{DT23}, after possibly
  shrinking \(Y\) around \(y\) we may assume that \(Y\) is integral.
  Moreover, after shrinking \(U\) around \(x\), we may assume that
  \(U = \Spec(B)\) is affine as well.
  \par We claim we may replace \(U\) by one of its irreducible components \(U'\)
  containing \(x\) with its reduced scheme structure.
  Letting \(U'\) be such a component, we have a factorization
  \[
    \cO_{Y,y} \longrightarrow \cO_{X,x} \longrightarrow \cO_{U',x}.
  \]
  If we show that the composition is pure,
  then the map \(\cO_{Y,y} \rightarrow \cO_{X,x}\) is pure.
  We therefore replace \(U\) by one of its irreducible components \(U'\) to
  assume that \(U\) is integral, in which case \(U \to \bA^e_Y\) is
  dominant.
  Note that the assumptions on the factorization
  \eqref{eq:splinterequidimfactor} are not affected by these
  reductions.\medskip
  \par Now 
  let \(Z = \bA^e_Y = \Spec(A[T_1,T_2,\ldots,T_e])\) and set \(z = g(x)\).
  Let
  \[
    \cO_{Z,z} \longrightarrow \cO_{Z,z}^h
  \]
  be the Henselization map, which is
  faithfully flat \cite[Theorem 43.8]{Nag75}
  (see also \cite[Th\'eor\`eme 18.6.6\((iii)\)]{EGAIV4}).
  \begin{step}\label{step:splinterequidim2}
    The Henselization \(\cO_{Z,z}^h\) is a splinter.
    In particular, \(\cO_{Z,z}^h\) is a domain.
  \end{step}
  It suffices to show that \(\cO_{Z,z}^h\) is a splinter because splinters are
  domains \cite[Lemma 2.1]{DT23}.
  By \cite[Theorem B]{DT23}, it suffices to show that \(\cO_{Z,z}\) is a
  splinter.
  Let \(\fp \subseteq A\) be the prime ideal corresponding to \(y\) and
  let \(\fq \subseteq A[T_1,T_2,\ldots,T_e]\)
  be the prime ideal corresponding to \(z\).
  Then, the map \(\cO_{Y,y} \to \cO_{Z,z}\) can be written as
  \[
    \varphi\colon A_\fp \longrightarrow
    \bigl(A_\fp[T_1,T_2,\ldots,T_e]\bigr)_\fq
  \]
  which is a regular map.
  We first show that \(\fm_y
  \cdot \cO_{Z,z} = \fm_z\), that is,
  \begin{align*}
    \fp \cdot \bigl(A_\fp[T_1,T_2,\ldots,T_e]\bigr)_\fq &= \fq \cdot
    \bigl(A_\fp[T_1,T_2,\ldots,T_e]\bigr)_\fq.
  \intertext{It suffices to show this equality before localizing at \(\fq\),
  that is,}
    \fp \cdot \bigl(A_\fp[T_1,T_2,\ldots,T_e]\bigr) &= \fq \cdot
    \bigl(A_\fp[T_1,T_2,\ldots,T_e]\bigr).
  \end{align*}
  This follows from the fact that \(z = g(x)\) is the generic point of
  \(p^{-1}(y)\), which implies
  \[
    \fq \cdot (\kappa(\fp)[T_1,T_2,\ldots,T_e]) = (0).
  \]
  Finally, since \(\cO_{Y,y}\) is a splinter, \(\varphi\) is regular, and
  \(\fm_y \cdot \cO_{Z,z} = \fm_z\), we see that \(\cO_{Z,z}\) is a splinter by
  \cite[Theorem 1.3]{Lyu25}.\medskip
  \par We now consider the diagram
  \[
    \begin{tikzcd}
      U' \rar\dar[swap]{g'} & U_z \rar\dar & U\dar{g}\\
      \Spec\bigl(\cO_{Z,z}^h\bigr) \rar & \Spec(\cO_{Z,z}) \rar & Z
    \end{tikzcd}
  \]
  with Cartesian squares.
  \begin{step}\label{step:splinterequidim3}
    There exists a point \(x' \in U'\) such that \(x'\) maps to \(x \in U\).
    For any such point \(x'\), the map 
    \begin{equation}\label{eq:splinterequidimstep3}
      \cO_{Z,z}^h \longrightarrow \cO_{U',x'}
    \end{equation}
    is a module-finite extension.
  \end{step}
  Note that \(g'\) is quasi-finite and of finite type by base change
  \cite[Proposition 6.11.5 and Proposition 6.3.4\((iii)\)]{EGAInew} and that
  \(g'\) is dominant by flat base change \cite[Proposition
  6.1.7\((ii)\)]{EGAInew}.
  Since the morphism \(U' \to U_z\) is faithfully flat by base
  change \cite[Corollaire 2.2.13\((i)\)]{EGAIV2} and \(x \in U_z\),
  there exists a point \(x' \in U'\) such that \(x'\)
  maps to \(x \in U\).
  \par Now given a point \(x' \in U'\) such that \(x'\) maps to \(x \in U\),
  consider the map \eqref{eq:splinterequidimstep3}.
  By \cite[Th\'eor\`eme 18.5.11\((c)\)]{EGAIV4}, since \(\cO_{Z,z}^h\) is
  Henselian and \(g'\) is quasi-finite and of finite type,
  the map \eqref{eq:splinterequidimstep3} is module-finite.
  Also, the map
  \eqref{eq:splinterequidimstep3} is injective because \(g'\) is dominant
  \cite[Proposition 4.4.6\((i)\)]{EGAInew} and \(\cO_{Z,z}^h\) is a domain by
  Step \ref{step:splinterequidim2}.\medskip
  \begin{step}
    Conclusion of proof.
  \end{step}
  \par Consider the commutative diagram
  \[
    \begin{tikzcd}[column sep={3em,between origins}]
      \cO_{U',x'} & & \arrow[hook']{ll} \cO^h_{Z,z}\\
      \cO_{U,x} \uar & & \arrow{ll} \cO_{Z,z} \uar\\
      & \cO_{Y,y} \arrow{ul}\arrow{ur}
    \end{tikzcd}
  \]
  of Noetherian local rings.
  Since \(\cO^h_{Z,z}\) is a splinter by Step \ref{step:splinterequidim2}
  and
  \begin{equation}\label{eq:modfinext}
    \cO^h_{Z,z} \hooklongrightarrow \cO_{U',x'}
  \end{equation}
  is a module-finite extension by Step \ref{step:splinterequidim3},
  the map \eqref{eq:modfinext} splits.
  In particular, the map \eqref{eq:modfinext} is pure.
  The composition \(\cO_{Y,y} \to \cO_{Z,z} \to \cO_{Z,z}^h\) is faithfully
  flat, and in particular, is pure by \cite[Chapter I, \S3, no.\ 
  5, Proposition 9]{Bou72}.
  By the commutativity of the diagram and the fact that pure ring maps are
  stable under composition \citeleft\citen{Oli70}\citemid Proposition
  1.2\((ii)\)\citepunct \citen{Oli73}\citemid Lemme 2.3\((i)\)\citeright,
  the composition
  \[
    \cO_{Y,y} \longrightarrow \cO_{X,x} \longrightarrow \cO_{U',x'}
  \]
  is pure.
  Thus,
  \(\cO_{Y,y} \rightarrow \cO_{X,x}\) is pure.
\end{proof}
We now prove Theorems \ref{thm:mainaffine} and \ref{thm:splinterequidim}.
Theorem \ref{thm:mainaffine} will then follow as a consequence.
\begin{proof}[Proof of Theorem \ref{thm:splinterequidim}]
  The implication \(\Rightarrow\) follows from Theorem
  \ref{thm:splinterequidimlocal}.\medskip
  \par It remains to show the implication \(\Leftarrow\).
  Since the splinter property is local \cite[Lemma 2.3]{DT23}, it suffices to
  show that for every affine open subset \(\Spec(R) \subseteq Y\), the ring
  \(R\) is a splinter.
  Let \(\varphi\colon R \to S\) be a module-finite ring map such that
  \(\Spec(\varphi)\) is surjective.
  We want to show that \(\varphi\) splits.
  The composition
  \[
    \Spec(S) \longrightarrow \Spec(R) \hooklongrightarrow Y 
  \]
  is locally equidimensional, and hence is strongly pure by assumption.
  Since \(\Spec(\varphi)\) is surjective and this composition is strongly pure,
  for every maximal ideal \(\fm \subseteq R\), there exists a
  prime ideal \(\fq \subseteq S\) such that \(R_\fm \to S_\fq\) is pure.
  This map factors as
  \[
    R_\fm \longrightarrow S \otimes_R R_\fm \longrightarrow S_\fq.
  \]
  Thus, \(R_\fm \to S \otimes_R R_\fm\) is pure.
  Since purity is a local condition \cite[Proposition 1.2\((vii)\)]{Oli70},
  this implies \(R \to S\) is pure.
  Since purity and
  splitting are equivalent for module-finite maps of Noetherian rings
  \cite[Chapitre I, Th\'eor\`eme 2.3]{Laz68}, this implies
  \(R \to S\) splits.
\end{proof}
\begin{proof}[Proof of Theorem \ref{thm:mainaffine}]
  Since the splinter property is local \cite[Lemma 2.3]{DT23}, 
  the implication \(\Rightarrow\) is a special case of the implication
  \(\Rightarrow\) of Theorem \ref{thm:splinterequidim}.\medskip
  \par The implication \(\Leftarrow\) follows from the fact that
  finite morphisms are locally equidimensional and the fact that purity and
  splitting are equivalent for module-finite maps of Noetherian rings
  \cite[Chapitre I, Th\'eor\`eme 2.3]{Laz68}.
\end{proof}

\section{A weak Boutot-type theorem for \texorpdfstring{\(F\)}{F}-rationality}
We recall the definition of \(F\)-rational rings.
\begin{citeddef}[{\citeleft\citen{FW89}\citemid Definition 1.10\citepunct
  \citen{HH94}\citemid \S2 and Definition 4.1\citeright}]
  Let \(R\) be a Noetherian ring.
  A sequence of elements \(x_1,x_2,\ldots,x_n \in R\) is a
  \textsl{sequence of parameters} if for every prime ideal \(\fp \subseteq R\)
  containing \(I = (x_1,x_2,\ldots,x_n)\), the elements
  \(\frac{x_1}{1},\frac{x_2}{1},\ldots,\frac{x_n}{1} \in R_\fp\) form part of a
  system of parameters in \(R_\fp\).
  In this case, we say that \(I\) is a \textsl{parameter ideal}.
  \par A Noetherian ring \(R\) of prime characteristic \(p > 0\) is
  \textsl{\(F\)-rational} if every parameter ideal \(I \subseteq R\)
  is tightly closed in the sense of \cite[Definition 3.1]{HH90}.
\end{citeddef}
For schemes, we adopt the following definition.
\begin{definition}
  Let \(X\) be a locally Noetherian scheme of prime characteristic \(p > 0\).
  For a point \(x \in X\), we say that \(X\) is \textsl{\(F\)-rational at \(x\)}
  if \(\cO_{X,x}\) is \(F\)-rational.
  We say that \(X\) is \textsl{\(F\)-rational} if \(X\) is \(F\)-rational at
  every \(x \in X\).
\end{definition}
\begin{remark}
  By \cite[Proof of Theorem 4.2\((e)\)]{HH94}, a Noetherian ring \(R\) of prime
  characteristic \(p > 0\) is \(F\)-rational if \(R_\fp\) is \(F\)-rational for
  every prime ideal \(\fp \subseteq R\).
  The converse is true if \(R\) is a quotient of a Cohen--Macaulay ring
  \cite[Theorem 4.2\((e)\)]{HH94}.
  This assumption holds for example when \(R\) is \(F\)-finite \cite[Remark
  13.6]{Gab04} or has a dualizing complex \cite[Corollary 1.4]{Kaw02}.
  As far as we are aware, whether \(F\)-rationality localizes is an open
  problem in general.
\end{remark}
We can now prove our weak Boutot-type theorem for \(F\)-rationality (Theorem
\ref{thm:frationallesp}).
\begin{proof}[Proof of Theorem \ref{thm:frationallesp}]
  The ``In particular'' statement follows from the first statement since
  strongly pure morphisms are completely pure at every point in the
  image.\medskip
  \par Let \(x_0\) be a point in \(f^{-1}(y)\) such that \(\cO_{Y,y} \to
  \cO_{X,x_0}\) is pure.
  Let \(x\) be a closed point in \(f^{-1}(y)\) such that \(x_0\) specializes to
  \(x\).
  Since \(\cO_{Y,y} \to \cO_{X,x_0}\) factors as
  \[
    \cO_{Y,y} \longrightarrow \cO_{X,x} \longrightarrow \cO_{X,x_0},
  \]
  we see that
  \[
    \varphi\colon \cO_{Y,y} \longrightarrow \cO_{X,x}
  \]
  is pure.
  Since \(\cO_{X,x}\) is normal \cite[Theorem 4.2\((b)\)]{HH94}, we see that
  \(\cO_{Y,y}\) is normal \cite[Proposition 6.15\((d)\)]{HR74}.
  Thus, \(\varphi\) is in injective map of domains.
  \par Let \(s_1,s_2,\ldots,s_n \in \cO_{Y,y}\) be a sequence of parameters and
  set \(I = (s_1,s_2,\ldots,s_n)\).
  We claim that \(\varphi(s_1),\varphi(s_2),\ldots,\varphi(s_n)\) is a
  sequence of parameters and hence \(\varphi(I)\cO_{X,x}\) is a parameter ideal.
  Let \(s_{n+1},s_{n+2},\ldots,s_d \in \cO_{Y,y}\) be a sequence of elements
  such that \(s_1,s_2,\ldots,s_d\) is a system of parameters for
  \(\cO_{Y,y}\).
  Let \(t_1,t_2,\ldots,t_e \in \cO_{X,x}\) be a sequence of elements that map to
  a system of parameters in \(\cO_{X,x} \otimes_{\cO_{Y,y}} \kappa(y)\).
  Since \(\cO_{Y,y}\) is universally catenary and equidimensional
  and \(f\) is equidimensional at \(x\), we have the equality
  \[
    \dim(\cO_{X,x}) = \dim(\cO_{Y,y}) + \dim\bigl(\cO_{X,x}
    \otimes_{\cO_{Y,y}} \kappa(y)\bigr) = d+e
  \]
  by \cite[Proposition 13.2.3]{EGAIV3}.
  Since the sequence 
  \begin{equation}\label{eq:sop}
    \varphi(s_1),\varphi(s_2),\ldots,\varphi(s_d),t_1,t_2,\ldots,t_e \in
    \cO_{X,x}
  \end{equation}
  generates an \(\fm_x\)-primary ideal and consists of
  \(\dim(\cO_{X,x}) = d+e\) elements,
  the sequence \eqref{eq:sop} is a system of parameters.
  \par It remains to show that \(I\) is tightly closed.
  By \cite[Lemma 4.11]{HH90}, we have
  \[
    \varphi(I^*) \subseteq \bigl(\varphi(I)\cO_{X,x}\bigr)^* =
    \varphi(I)\cO_{X,x}.
  \]
  Since \(\varphi\) is pure, contracting back to \(\cO_{Y,y}\) yields \(I^* =
  I\).
  Thus, \(\cO_{Y,y}\) is \(F\)-rational.
\end{proof}

\end{document}